\newtheorem{theorem}{Theorem}[section]
\newtheorem{corollary}[theorem]{Corollary}
\newtheorem{lemma}[theorem]{Lemma}
\newtheorem{question}{Question}
\numberwithin{equation}{section}
\newcommand{\ds}{\displaystyle}
\newenvironment{proof}[1][Proof]{\begin{trivlist}
\item[\hskip \labelsep {\bfseries #1}]}{\end{trivlist}}
\newcommand{\qed}{\hfill \ensuremath{\Box}}
\begin{document}

\thispagestyle{empty}
\title{\textbf{Assorted Musings on Dimension-critical Graphs}}

\author{\textbf{Matt Noble} \\
Department of Mathematics\\
Middle Georgia State University\\
matthew.noble@mga.edu}
\date{}
\maketitle

\begin{abstract} 

\vspace{3pt}

For a finite simple graph $G$, say $G$ is of dimension $n$, and write $\dim(G) = n$, if $n$ is the smallest integer such that $G$ can be represented as a unit-distance graph in $\mathbb{R}^n$.  Define $G$ to be \emph{dimension-critical} if every proper subgraph of $G$ has dimension less than $G$.  In this article, we determine exactly which complete multipartite graphs are dimension-critical.  It is then shown that for each $n \geq 2$, there is an arbitrarily large dimension-critical graph $G$ with $\dim(G) = n$.  We then pose and expound upon a number of questions related to this subject matter, questions that hopefully will prompt future research.\\[5pt]

\noindent \textbf{Keywords and phrases:} graph dimension, unit-distance graph embeddings, edge-criticality, complete multipartite graphs
\end{abstract}

\section{Introduction}

Define a finite simple graph $G$ to be \emph{representable} (or alternately \emph{embeddable}) in $\mathbb{R}^n$ if $G$ can be drawn with its vertices being points of $\mathbb{R}^n$ where any two adjacent vertices are necessarily placed at a unit-distance apart.  Say $G$ is of dimension $n$, and denote $\dim(G) = n$, if $G$ is representable in $\mathbb{R}^n$ but not in $\mathbb{R}^{n-1}$.  For a non-empty graph $G$, define $G$ to be \emph{dimension-critical} if for every proper subgraph $H$ of $G$, $\dim(H) < \dim(G)$.  

This notion of graph dimension was initially put forth in a 1965 note by Erd\H{o}s, Harary, and Tutte \cite{eht}.  There the authors establish the dimension of a few common families of graphs and, as typical of a paper authored or co-authored by Erd\H{o}s, conclude by stirring the pot with a number of questions for future investigation.  Indeed, one of these questions serves as an impetus for our present work.  Erd\H{o}s, Harary, and Tutte ask the reader to ``\ldots characterize the critical $n$-dimensional graphs, at least for $n = 3$ (this is trivial for $n = 2$)."  Indeed, it takes only a moment's thought to conclude that if $G$ is a dimension-critical graph with $\dim(G) = 2$, then $G$ is either a cycle or the star $K_{1,3}$.  For higher dimensions, the situation is murkier, and for an arbitrary graph $G$, an efficiently-computed condition that is both necessary and sufficient for $G$ to be dimension-critical seems unlikely to exist.  We can, however, claim success in characterizing the criticality of certain families of graphs.  

In Section 2, we give a full description of which complete multipartite graphs are dimension-critical.  To more succinctly phrase our result, we will implement notation similar to that seen in \cite{maehara}.  For non-negative integers $\alpha$, $\beta$, and $\gamma$, define $G(\alpha, \beta, \gamma)$ to be the complete multipartite graph with $\alpha + \beta + \gamma$ parts, $\alpha$ of which are of size 1, $\beta$ of which are of size 2, and $\gamma$ of which are of size 3.  We first observe that any complete multipartite graph having a part of size 4 or larger is in fact not dimension-critical, and then determine exactly which assignments of $\alpha$, $\beta$, and $\gamma$ result in $G(\alpha, \beta, \gamma)$ being dimension-critical.

In Section 3, for any $n \geq 2$ and positive integer $c$, we show through an explicit construction the existence of a dimension-critical graph $G$ with $\dim(G) = n$ and $|E(G)| > c$.  This generalizes a result of Boza and Revuelta \cite{boza} where they show it is possible for $n = 3$.  

In Section 4, we conclude with a number of observations and questions that will hopefully re-stir the pot and prompt future research.  In particular, we make a beginning foray into the problem of determining for which $n,k$ there exists an arbitrarily large dimension-critical graph $G$ having $\dim(G) = n$ and chromatic number $\chi(G) = k$.

\section{Dimension-critical Complete Multipartite Graphs} 

In \cite{maehara}, Maehara determines the Euclidean dimension of all complete multipartite graphs.  We ourselves will not be concerned with this particular graph parameter, however for those interested readers, we remark that the Euclidean dimension of a graph $G$ is defined similarly to the dimension of $G$ with the added stipulation that in any representation in $\mathbb{R}^n$, non-adjacent vertices of $G$ are forbidden to be placed a unit-distance apart.  Regardless, the following theorem is an easily established corollary of the work done in \cite{maehara}.

\begin{theorem} \label{hiroshi} Let $G$ be a complete $(\alpha + \beta + \gamma)$-partite graph having exactly $\alpha$ parts of size 1, exactly $\beta$ parts of size 2, and exactly $\gamma$ parts of size greater than or equal to 3.  If $\beta + \gamma \leq 1$, then $\dim(G) = \alpha + \beta + 2\gamma - 1$.  If $\beta + \gamma \geq 2$, $\dim(G) = \alpha + \beta + 2\gamma$.
\end{theorem}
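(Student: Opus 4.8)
The plan is to establish matching upper and lower bounds for $\dim(G)$, taking $\alpha + \beta + 2\gamma$ as a baseline and pinning down exactly when a single dimension can be shaved off.

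For the upper bound I would exhibit explicit unit-distance representations. The general-purpose construction assigns each part its own coordinate subspace, orthogonal to all the others and sharing a common origin: a part of size $1$ or $2$ is placed on a $0$-sphere inside a line (one coordinate), while a part of size $\ge 3$ is placed on a circle inside a plane (two coordinates), each at radius $r_i$ with $\sum_i r_i^2 = 1$. Since the subspaces meet orthogonally at the origin, any two vertices in different parts are automatically at distance $\sqrt{r_i^2 + r_j^2} = 1$, while vertices inside a part are unconstrained. This uses $\alpha + \beta + 2\gamma$ dimensions and already settles the case $\beta + \gamma \ge 2$. When $\beta + \gamma \le 1$ I would instead place the $\alpha$ singleton vertices as a regular unit simplex spanning an $(\alpha-1)$-flat and put the lone larger part (if present) on the sphere of points lying at distance $1$ from every simplex vertex, which sits in the orthogonal complement of that flat; a size-$2$ part needs only the two antipodes of a $0$-sphere (one extra dimension) and a size-$\ge 3$ part needs a full circle (two extra dimensions), giving $\alpha + \beta + 2\gamma - 1$ in each sub-case.

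The heart of the matter is the lower bound, which rests on an orthogonality lemma. For a part $A$ let $U_A$ be the linear span of the differences $a - a'$ with $a,a' \in A$. A short computation from $|a-b| = |a'-b| = |a-b'| = |a'-b'| = 1$ shows $\langle a - a', b - b'\rangle = 0$ whenever $A \ne B$; that is, the direction spaces of distinct parts are mutually orthogonal. Hence $\dim\big(\sum_A U_A\big) = \sum_A \dim U_A$, and because distinct vertices receive distinct points, each size-$2$ part contributes at least $1$ while each size-$\ge 3$ part contributes at least $2$ (three points on a common sphere about any outside vertex cannot be collinear). This yields $\beta + 2\gamma$ orthogonal directions. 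I would then choose one representative per part, project these onto the orthogonal complement of $W := \sum_A U_A$, and note via the same distance identities that the projected representatives of the $\alpha$ singleton parts stay pairwise at distance exactly $1$; they therefore form a unit simplex contributing a further $\alpha - 1$ directions orthogonal to $W$, giving the universal bound $\dim(G) \ge \alpha + \beta + 2\gamma - 1$.

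The main obstacle is extracting the final $+1$ when $\beta + \gamma \ge 2$, i.e.\ showing the configuration cannot collapse any further. The projected representatives $y_i$ satisfy $|y_i - y_j|^2 = 1 - c_i - c_j$ for scalars $c_i \ge 0$ that record how far part $i$'s representative sits from the common ``shadow'' of the outside vertices. When $\alpha \ge 1$, a representative $y_t$ of a larger part fails to add a new dimension only if it coincides with the circumcenter of the singleton simplex, which forces a single value of $c_t$ making $c_s + c_t > 1$ for any second larger part, contradicting $|y_s - y_t|^2 \ge 0$; thus at most one larger part can collapse and the extra dimension is secured, while the case $\alpha = 0$ is immediate since then $\dim W \ge \beta + 2\gamma$ is already the full target. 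I would also remark that, as Theorem \ref{hiroshi} is advertised as a consequence of Maehara's Euclidean-dimension formula, an alternative route invokes that formula with $\dim(G) \le \operatorname{edim}(G)$ for the upper bound and a matching lower bound; but the self-contained orthogonality argument above seems cleaner and more transparent.
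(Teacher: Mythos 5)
Your proposal is essentially sound, but it takes a genuinely different route from the paper for the simple reason that the paper does not prove Theorem~\ref{hiroshi} at all: it is stated as ``an easily established corollary of the work done in'' Maehara's paper on the Euclidean dimension of complete multipartite graphs, and no argument is given. What you have written is a self-contained proof, and its engine --- the polarization identity $\langle a-a', b-b'\rangle = 0$ forced by the four unit distances between $\{a,a'\}$ and $\{b,b'\}$, hence mutually orthogonal difference spaces $U_A$ contributing $1$ per doubleton and $2$ per part of size $\ge 3$ (three distinct points on a sphere about an outside vertex cannot be collinear), plus the observation that differences of singleton representatives lie in $W^\perp$ and so survive projection as a unit simplex --- is in substance Maehara's own technique, so you have effectively reconstructed the citation rather than bypassed it. Your treatment of the final $+1$ when $\beta+\gamma\ge 2$ and $\alpha\ge 1$ is correct and is the genuinely delicate point: a non-singleton representative equidistant from the singleton simplex and lying in its affine hull must be the circumcenter, forcing $c_t = 1 - \frac{\alpha-1}{2\alpha} > \frac12$, so two such parts would give $|y_s-y_t|^2 = 1 - c_s - c_t < 0$; and the case $\alpha = 0$ is absorbed by $\dim W \ge \beta + 2\gamma$. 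What your approach buys is independence from the external reference and from the gap between dimension and Euclidean dimension; what the paper's route buys is brevity.

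One correction is needed in your upper bound. The normalization ``$\sum_i r_i^2 = 1$'' is inconsistent with your own requirement that every cross-part distance equal $\sqrt{r_i^2 + r_j^2} = 1$: with three or more parts that condition forces $r_i^2 = \tfrac12$ for every $i$, so $\sum_i r_i^2$ equals half the number of parts, not $1$. The construction itself is fine once you set every radius to $1/\sqrt2$ (each part then sits on a $0$-sphere or circle of radius $1/\sqrt2$ in its own orthogonal coordinate block). You should also be aware that the degenerate instances with no edges (e.g.\ $\alpha=0$, $\beta=1$, $\gamma=0$, where the stated formula returns $0$ but two distinct points cannot occupy the single point $\mathbb{R}^0$) sit uneasily with the paper's own convention from Theorem~\ref{notcritical}(i); this never matters in the paper's applications, but a careful write-up should either exclude the edgeless case or note the convention.
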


Theorem \ref{hiroshi} will figure prominently in this section, and indeed it has an immediate and relevant bearing.  Letting $G$ be a complete multipartite graph containing a part of size 4 or larger, and letting $G'$ be the graph formed by deleting from $G$ a vertex of that part, we have that $\dim(G) = \dim(G')$.  This gives us the corollary below.

\begin{corollary} \label{part4} Let $G$ be a complete multipartite graph having at least one part of size 4 or larger.  Then $G$ is not dimension-critical.
\end{corollary}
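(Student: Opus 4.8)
The plan is to show that removing a single edge from $G$ cannot lower its dimension, which immediately contradicts dimension-criticality. Let $G$ be a complete multipartite graph with a part $P$ of size at least $4$, and let $e = uv$ be an arbitrary edge of $G$. I want to establish that $\dim(G - e) = \dim(G)$, and since $e$ was arbitrary, this witnesses that $G$ is not dimension-critical.

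The key observation is that $G - e$ still contains a large complete multipartite subgraph whose dimension already equals $\dim(G)$. First I would compute $\dim(G)$ via Theorem \ref{hiroshi}, writing $G$ in the $(\alpha, \beta, \gamma)$-style bookkeeping where $\gamma$ counts parts of size $\geq 3$; since $G$ has a part of size $\geq 4 \geq 3$, we have $\gamma \geq 1$. The heart of the argument is then to exhibit inside $G - e$ a spanning complete multipartite subgraph $H$ (or a subgraph whose dimension I can compute directly) with $\dim(H) = \dim(G)$. The cleanest route uses the corollary-style remark preceding the statement: deleting a vertex $w$ of the size-$\geq 4$ part other than $u, v$ from $G$ yields a complete multipartite graph $G'$ with $\dim(G') = \dim(G)$, by Theorem \ref{hiroshi} (reducing a part of size $k \geq 4$ to size $k - 1 \geq 3$ leaves the triple $(\alpha, \beta, \gamma)$ unchanged). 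The point is that I can choose such a $w$ avoiding both endpoints of $e$ precisely because the part has size at least $4$ and at most one of $u, v$ lies in that part (the two endpoints of an edge lie in different parts). Hence $G'$ is a subgraph of $G - e$, giving $\dim(G - e) \geq \dim(G') = \dim(G)$, while $\dim(G - e) \leq \dim(G)$ trivially since $G - e$ is a subgraph of $G$.

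Combining the two inequalities yields $\dim(G - e) = \dim(G)$, so $\dim(G) \not> \dim(G - e)$, and therefore $G$ fails the dimension-critical condition at the edge $e$. As $G$ was an arbitrary complete multipartite graph with a part of size $\geq 4$, the corollary follows.

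The main obstacle, and the step requiring the most care, is verifying that a vertex $w \neq u, v$ can always be selected \emph{within} the large part so that $G'$ is genuinely a subgraph of $G - e$ — that is, confirming the edge $e$ survives the vertex deletion. This hinges on the fact that at most one endpoint of $e$ can belong to the size-$\geq 4$ part, leaving at least $4 - 2 = 2$ candidate vertices in that part to delete; I would also double-check the boundary bookkeeping in Theorem \ref{hiroshi}, namely that shrinking a part from size $k$ to $k-1$ (for $k \geq 4$) does not decrease $\gamma$ and hence leaves the dimension formula untouched, since both $k$ and $k-1$ still exceed $2$.
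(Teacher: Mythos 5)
There is a genuine error in the key containment. You choose $w$ in the large part with $w \neq u, v$ and then claim $G' = G \setminus \{w\}$ is a subgraph of $G - e$. It is not: since $w$ is not an endpoint of $e$, the edge $e = uv$ survives the deletion of $w$, so $e \in E(G')$ while $e \notin E(G - e)$. You even flag ``confirming the edge $e$ survives the vertex deletion'' as the step requiring care, but that condition is exactly what \emph{destroys} the containment you need --- for $G \setminus \{w\}$ to sit inside $G - e$ you need $e$ to be incident to $w$, i.e., $w \in \{u,v\}$. So the inequality $\dim(G - e) \geq \dim(G')$ is unsupported, and the proof as written does not go through. (Your other ingredient is fine: shrinking a part from size $k \geq 4$ to $k - 1 \geq 3$ leaves the triple $(\alpha,\beta,\gamma)$ unchanged, so $\dim(G \setminus \{w\}) = \dim(G)$ by Theorem \ref{hiroshi}.)

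The underlying problem is that you set out to prove something stronger than the corollary requires, namely that \emph{every} edge is non-critical. Dimension-criticality demands that every edge be critical, so it suffices to exhibit a single non-critical edge, and this is what the paper does: take any vertex $w$ of the part of size $\geq 4$ and any edge $e$ incident to $w$. Then $G \setminus \{w\}$ genuinely is a subgraph of $G - e$ (every edge of $G$ missing $w$ also misses $e$), and $\dim(G - e) \geq \dim(G \setminus \{w\}) = \dim(G)$, so $e$ is not critical and $G$ is not dimension-critical. Your version, restricted to edges $e$ with an endpoint $u$ in the large part and with $w = u$, recovers exactly this argument; for edges avoiding the large part entirely, the method gives no lower bound on $\dim(G - e)$ at all, since deleting an endpoint of such an edge can change $\alpha$ or $\beta$ and hence the dimension.
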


Now let $G$ be equal to some $G(\alpha, \beta, \gamma)$, and let $e \in E(G)$.  In deciding whether or not $G$ is dimension-critical, we will often consider $G - e$ as a subgraph of some other complete multipartite graph.  As an example, consider $G = G(1,0,2)$.  Label the partite sets of $G$ as $\{a\}, \{b_1, c_1, d_1\}, \{b_2, c_2, d_2\}$, and let $e = b_1b_2$.  Then $G - e$ is a subgraph of $G(1,3,0)$ whose partite sets are $\{a\}, \{b_1, b_2\}, \{c_1, d_1\}, \{c_2, d_2\}$.  

We list those dimension-critical complete multipartite graphs in the theorem below.

\begin{theorem} \label{if} Each of the following complete multipartite graphs are dimension-critical.
\begin{enumerate}
\item[(i)] $K_{\alpha}$ for $\alpha \geq 3$
\item[(ii)] $C_4$
\item[(iii)] $K_{1,3}$
\item[(iv)] $K_{2,3}$
\item[(v)] $G(\alpha,0,\gamma)$ for $\alpha \geq 0$ and $\gamma \geq 2$
\end{enumerate}
\end{theorem}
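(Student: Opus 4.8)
The plan is to show, for each listed graph $G$, that deleting any edge strictly lowers the dimension. I will use two facts repeatedly. First, dimension is monotone: if a graph $H$ is a spanning subgraph of a complete multipartite graph $H'$, then any unit-distance representation of $H'$ restricts to one of $H$, so $\dim(H) \le \dim(H')$, and the latter is computable from Theorem \ref{hiroshi}. Second, each graph here is edge-transitive within each type of edge (one joining two singleton parts, a singleton part to a larger part, or two larger parts), so for each type it suffices to delete a single representative edge. By Theorem \ref{hiroshi} the relevant base dimensions are $\dim(K_\alpha) = \alpha - 1$, $\dim(C_4) = 2$, $\dim(K_{1,3}) = 2$, $\dim(K_{2,3}) = 3$, and $\dim(G(\alpha,0,\gamma)) = \alpha + 2\gamma$.

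For $K_\alpha$ in (i), deleting any edge produces exactly $G(\alpha-2,1,0)$, whose dimension is $\alpha - 2 < \alpha - 1$ by Theorem \ref{hiroshi}. For the family (v), write the singleton parts as $\{a_1\},\dots,\{a_\alpha\}$ and the size-$3$ parts as $P_1,\dots,P_\gamma$, and split into edge types. If $e$ joins two singletons, then $G-e$ is exactly $G(\alpha-2,1,\gamma)$. If $e = a_1 x$ with $x \in P_1$, then $G-e$ is a spanning subgraph of the complete multipartite graph that places $a_1$ and $x$ in one size-$2$ part and the remaining two vertices of $P_1$ in a second size-$2$ part, namely $G(\alpha-1,2,\gamma-1)$. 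If $e$ joins a vertex of $P_1$ to one of $P_2$, then $G-e$ is a spanning subgraph of $G(\alpha,3,\gamma-2)$, formed analogously by pairing the two endpoints and splitting the size-$2$ remainders of $P_1$ and $P_2$. Each containing graph satisfies $\beta+\gamma \ge 3$, so Theorem \ref{hiroshi} gives its dimension as $\alpha + 2\gamma - 1$; by monotonicity $\dim(G-e) \le \alpha + 2\gamma - 1 < \alpha + 2\gamma = \dim(G)$. Since $\gamma \ge 2$ the third type always occurs, while the first two occur exactly when $\alpha \ge 2$ and $\alpha \ge 1$, so every edge is covered.

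The graphs in (ii)--(iv) instead require explicit low-dimensional embeddings of $G-e$, because here the analogous merging construction returns only the non-strict bound $\dim(G-e) \le \dim(G)$. Concretely, $C_4 - e$ is the path $P_4$ and $K_{1,3} - e$ is $K_{1,2}$ together with an isolated vertex, each of which embeds on a line and so has dimension $1 < 2$. And $K_{2,3} - e$ is a $4$-cycle with a single pendant vertex: drawing the cycle as a unit rhombus in $\mathbb{R}^2$ and placing the pendant vertex on the unit circle about its neighbor gives an embedding in $\mathbb{R}^2$, so its dimension is $2 < 3$.

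The bookkeeping—checking that each constructed $H$ really contains $G-e$ as a spanning subgraph and that the count in Theorem \ref{hiroshi} lands on $\alpha + 2\gamma - 1$ in all three edge types—is routine but must be carried out carefully. I expect the only genuine subtlety to be the contrast between the two regimes: the merging argument succeeds for (i) and (v) precisely because $G$ and the resulting $H$ lie in the same branch of Theorem \ref{hiroshi} while the total $\alpha + \beta + 2\gamma$ falls by exactly one, whereas for (ii)--(iv) the corresponding merge either does not lower that total or moves $H$ across the threshold $\beta + \gamma = 2$ into the other branch, so the net dimension fails to drop. Recognizing that these boundary cases demand hand-built embeddings rather than the uniform argument is the crux.
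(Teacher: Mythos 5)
Your proposal is correct and follows essentially the same route as the paper: the main case (v) uses exactly the paper's argument of splitting edges into three orbit types and bounding $\dim(G-e)$ by viewing $G-e$ as a subgraph of $G(\alpha-2,1,\gamma)$, $G(\alpha-1,2,\gamma-1)$, or $G(\alpha,3,\gamma-2)$, each of dimension $\alpha+2\gamma-1$ by Theorem \ref{hiroshi}. The only difference is that for (i)--(iv) you supply explicit embeddings and the identification $K_\alpha - e = G(\alpha-2,1,0)$ where the paper simply cites \cite{eht} and \cite{joe6} or declares the cases obvious; your versions check out.
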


\begin{proof} In \cite{eht}, it is observed that $\dim(K_{\alpha}) = \alpha - 1$ and that $\dim(K_\alpha - e) = \alpha - 2$, so we have that $K_{\alpha}$ is dimension-critical.  It is obvious that the cycle $C_4$ and star $K_{1,3}$ are dimension-critical.  It is also fairly easy to see that $\dim(K_{2,3}) = 3$ and $\dim(K_{2,3} - e) = 2$, although it is noted as well in \cite{joe6} that $K_{2,3}$ is a dimension 3 graph with minimum edge-set, which in turn implies that $K_{2,3}$ is dimension-critical.  

Now let $G = G(\alpha,0,\gamma)$ for $\alpha \geq 0$ and $\gamma \geq 2$, and note that Theorem \ref{hiroshi} gives $\dim(G) = \alpha + 2\gamma$.  Label the partite sets of $G$ as $\{a_1\}, \ldots, \{a_\alpha\}$, $\{b_1, c_1, d_1\}, \ldots, \{b_\gamma, c_\gamma, d_\gamma\}$.  Let $e_1 = a_1a_2$, $e_2 = b_1b_2$, and $e_3 = a_1b_1$.  For any $e \in E(G)$, there is an automorphism of $G$ mapping $e$ to one of $e_1$, $e_2$, or $e_3$, so to show that $G$ is dimension-critical, we just need to show that for $i \in \{1,2,3\}$, $\dim(G) > \dim(G - e_i)$.  First note that $G - e_1$ is a subgraph of $G(\alpha - 2, 1, \gamma)$ which by Theorem \ref{hiroshi} is of dimension $\alpha + 2\gamma - 1$.  Secondly, note that $G - e_2$ is a subgraph of $G(\alpha, 3, \gamma - 2)$ which again by Theorem \ref{hiroshi} is of dimension $\alpha + 2\gamma - 1$.  Finally, we have that $G - e_3$ is a subgraph of $G(\alpha - 1, 2, \gamma - 1)$ which is of dimension $\alpha + 2\gamma - 1$ as well.  Since for arbitrary graphs $H$ and $K$, $H$ being a subgraph of $K$ implies that $\dim(H) \leq \dim(K)$, we have now shown that for any $e \in E(G)$, $\dim(G - e) \leq \alpha + 2\gamma - 1 < \dim(G)$.  This completes the proof that $G$ is dimension-critical.\qed
\end{proof}

\begin{theorem} \label{betagamma3} Let $G = G(\alpha, \beta, \gamma)$ where $\alpha \geq 0$, $\beta \geq 1$, and $\beta + \gamma \geq 3$.  Then $G$ is not dimension-critical.
\end{theorem}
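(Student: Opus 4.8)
The plan is to show that $G = G(\alpha, \beta, \gamma)$ with $\beta \geq 1$ and $\beta + \gamma \geq 3$ fails to be dimension-critical by exhibiting a single edge $e$ whose deletion does not lower the dimension, i.e. $\dim(G - e) = \dim(G)$. Since $\beta + \gamma \geq 2$, Theorem \ref{hiroshi} gives $\dim(G) = \alpha + \beta + 2\gamma$, so it suffices to produce an edge $e$ with $\dim(G - e) = \alpha + \beta + 2\gamma$. The natural candidate is an edge joining the two vertices of a part of size 2. Label a size-2 part as $\{u, v\}$ and set $e = uv$ --- wait, $u$ and $v$ lie in the same part and are non-adjacent, so instead I take $e$ to be an edge between $u$ and a vertex in another part.

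The cleaner route is to view $G - e$ as a subgraph of a larger complete multipartite graph and bound its dimension from below by exhibiting it as a supergraph of a graph of the right dimension. Concretely, I would delete an edge $e = u_1 u_2$ where $u_1, u_2$ are the two vertices of one size-2 part $\{u_1, u_2\}$ and $\{w_1, w_2\}$ is a second part (of size 2 or the single vertex of a part of size $\geq 1$), guaranteed to exist because $\beta \geq 1$ and $\beta + \gamma \geq 3$ ensure at least three nontrivial parts are present. Since $u_1, u_2$ sit in the same part they are already non-adjacent, so the relevant move is to pick $e$ joining two vertices in distinct parts and argue that after its removal the two endpoints can be \emph{merged} conceptually into a new part structure. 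Following the paper's own technique in the $G(1,0,2)$ example, I would identify the complete multipartite graph $H$ for which $G - e$ is a spanning subgraph, then separately exhibit a complete multipartite \emph{subgraph} $G''$ of $G - e$ with $\dim(G'') = \alpha + \beta + 2\gamma$.

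The key computation is the lower bound. Deleting the edge $e = u_1 w_1$ (with $\{u_1, u_2\}$ a size-2 part and $w_1$ in another part) from $G$ leaves $u_1$ non-adjacent to both $u_2$ and $w_1$, so $\{u_1\}$ together with $w_1$ can be regrouped. The surviving graph still contains a complete multipartite subgraph on the same vertex set in which the parameters $(\alpha, \beta, \gamma)$ have shifted but the quantity $\alpha + \beta + 2\gamma$ is preserved. I would verify via Theorem \ref{hiroshi} that this subgraph has dimension exactly $\alpha + \beta + 2\gamma$, whence $\dim(G - e) \geq \alpha + \beta + 2\gamma$; combined with the trivial upper bound $\dim(G - e) \leq \dim(G) = \alpha + \beta + 2\gamma$, this forces equality and defeats dimension-criticality.

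The main obstacle will be choosing the regrouping of the endpoints of $e$ so that the resulting complete multipartite subgraph has the same value of $\alpha + \beta + 2\gamma$ and still satisfies $\beta + \gamma \geq 2$ (so that the second, larger case of Theorem \ref{hiroshi} applies rather than the sporadic $\beta + \gamma \leq 1$ case). The hypothesis $\beta + \gamma \geq 3$ is presumably exactly what guarantees enough parts remain after merging two of them to keep $\beta + \gamma \geq 2$ in the subgraph; I would check the boundary cases --- smallest admissible $(\alpha, \beta, \gamma)$ such as $(0, 3, 0)$, $(0, 1, 2)$, and $(0, 2, 1)$ --- by hand to confirm the construction does not degenerate. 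I expect the regrouping to take $e$ between a size-2 part and one other part, turning $\{u_1, u_2\}$ and the other part into a configuration that trades a part of size 2 for a part of size 1 plus an enlarged part, leaving $\alpha + \beta + 2\gamma$ fixed.
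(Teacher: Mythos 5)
Your overall strategy is the right one --- exhibit a single edge $e$ and certify $\dim(G-e) = \dim(G)$ by finding a complete multipartite subgraph of $G - e$ whose dimension, computed via Theorem \ref{hiroshi}, already equals $\alpha + \beta + 2\gamma$. But the specific construction you propose has a genuine gap. Your regrouping is a \emph{spanning} one: you merge the size-2 part $\{u_1,u_2\}$ with the part containing the other endpoint $w_1$ of the deleted edge. This works when $w_1$ lies in a part of size 1 (producing a size-3 part, so $(\alpha,\beta,\gamma) \mapsto (\alpha-1,\beta-1,\gamma+1)$ and the sum $\alpha+\beta+2\gamma$ is preserved) or size 2 (producing a size-4 part, again preserving the sum). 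But it fails when $w_1$ lies in a part of size 3: the merged part has size 5, and in Maehara's count you trade a size-2 part and a size-$\geq 3$ part for a single size-$\geq 3$ part, dropping the dimension by 1. In the admissible case $\alpha = 0$, $\beta = 1$, $\gamma \geq 2$ --- your own flagged boundary case $G(0,1,2)$ --- every edge incident to the size-2 part goes to a size-3 part, so every spanning regrouping available to you loses a dimension, and the construction degenerates exactly where you hoped it would not. Separately, the regrouping you describe in your last paragraph (``trades a part of size 2 for a part of size 1 plus an enlarged part,'' i.e.\ splitting $u_1$ off from $u_2$ and absorbing $u_1$ into the other part) is not a subgraph of $G - e$ at all: placing $u_1$ and $u_2$ in different parts of a complete multipartite subgraph would force $u_1u_2$ to be an edge, and it is not.

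The repair is to drop the spanning requirement, which is precisely what the paper does. Take $v$ to be either vertex of a size-2 part and consider the induced subgraph $G \setminus \{v\} = G(\alpha+1, \beta-1, \gamma)$. Since $\beta + \gamma \geq 3$, we still have $(\beta - 1) + \gamma \geq 2$, so Theorem \ref{hiroshi} gives $\dim(G \setminus \{v\}) = (\alpha+1) + (\beta-1) + 2\gamma = \alpha + \beta + 2\gamma = \dim(G)$. As $G \setminus \{v\}$ is a subgraph of $G - e$ for every edge $e$ incident to $v$, none of those edges is critical, and $G$ is not dimension-critical. This one-line vertex deletion handles all cases uniformly, including $G(0,1,\gamma)$, and avoids the case analysis on the size of the part containing the other endpoint.
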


\begin{proof} Let $v \in V(G)$ where $v$ is contained in a part of size 2.  Then $G \setminus \{v\}$ = $G(\alpha + 1, \beta - 1, \gamma)$ and by Theorem \ref{hiroshi}, $\dim(G \setminus \{v\}) = \alpha + \beta + 2\gamma$.  Since $\dim(G) = \alpha + \beta + 2\gamma$ as well, we have that $G$ is not dimension-critical.\qed
\end{proof}

In light of Theorems \ref{if} and \ref{betagamma3}, the only remaining complete multipartite graphs that we must investigate are $K_2$, $G(\alpha, 1 , 0)$ for $\alpha \geq 1$, $G(\alpha, 1, 1)$ for $\alpha \geq 1$, $G(\alpha, 2 , 0)$ for $\alpha \geq 1$, and $G(\alpha, 0 , 1)$ for $\alpha \geq 2$.  We show that each of these graphs are not dimension-critical in the theorem below.

\begin{theorem} \label{notcritical} Each of the following complete multipartite graphs are not dimension-critical. 
\begin{enumerate}
\item[(i)] $K_2$
\item[(ii)] $G(\alpha, 1 , 0)$ for $\alpha \geq 1$
\item[(iii)] $G(\alpha, 1, 1)$ for $\alpha \geq 1$
\item[(iv)] $G(\alpha, 2 , 0)$ for $\alpha \geq 1$
\item[(v)] $G(\alpha, 0 , 1)$ for $\alpha \geq 2$
\end{enumerate}
\end{theorem}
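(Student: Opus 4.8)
The plan is to lean on the elementary inequality $\dim(G-e) \le \dim(G)$, valid for every edge $e$ because $G-e$ is a subgraph of $G$. This means $G$ fails to be dimension-critical the moment we can exhibit a \emph{single} edge $e$ with $\dim(G-e) = \dim(G)$. Since lower bounds on dimension are the uncooperative direction, I would secure each such equality not by an embedding but by locating, inside $G-e$, a \emph{complete multipartite} subgraph $H$ whose dimension Theorem~\ref{hiroshi} hands me for free; then subgraph monotonicity gives $\dim(G) = \dim(H) \le \dim(G-e) \le \dim(G)$, forcing equality. This is exactly the device already used in the proof of Theorem~\ref{betagamma3}, now pressed into service at the boundary cases.

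For items (ii)--(iv), each of which has $\alpha \ge 1$ and $\beta \ge 1$, I would delete one edge $a_1 p_1$ joining a part $\{a_1\}$ of size $1$ to a vertex of a part $\{p_1, p_2\}$ of size $2$, and then regard $\{a_1, p_1, p_2\}$ as a single part of size $3$. In $G - a_1 p_1$ both $a_1 p_1$ and $p_1 p_2$ are non-edges, so the complete multipartite graph $G(\alpha-1, \beta-1, \gamma+1)$ built on this repartition is a subgraph of $G - a_1 p_1$ (only a subgraph, not an equality, since the edge $a_1 p_2$ survives and must simply be discarded). The passage $(\alpha, \beta, \gamma) \mapsto (\alpha-1, \beta-1, \gamma+1)$ leaves both $\alpha + \beta + 2\gamma$ and $\beta + \gamma$ unchanged, so Theorem~\ref{hiroshi} assigns this subgraph the same dimension as $G$; concretely the subgraph is $G(\alpha-1, 0, 1)$, $G(\alpha-1, 0, 2)$, and $G(\alpha-1, 1, 1)$ in cases (ii), (iii), (iv), respectively. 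Hence $a_1 p_1$ is a non-reducing edge and none of these three graphs is dimension-critical.

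Item (v) has $\beta = 0$, so there is no size-$2$ part to absorb a singleton; instead I would delete an edge $a_1 a_2$ joining two parts of size $1$, which merges them into a genuine part of size $2$ and now produces the honest equality $G - a_1 a_2 = G(\alpha-2, 1, 1)$. Here the repartition $(\alpha, 0, 1) \mapsto (\alpha-2, 1, 1)$ lowers $\alpha + \beta + 2\gamma$ by one, but it simultaneously pushes $\beta + \gamma$ from $1$ up to $2$, crossing the threshold in Theorem~\ref{hiroshi}; the two effects cancel and the dimension is preserved (both sides equal $\alpha + 1$), so $a_1 a_2$ is non-reducing. The lone genuinely degenerate point is $K_2$ in item (i): deleting its one edge leaves two vertices and no edges, and the regrouping trick cannot rescue it, because merging the two singletons keeps $\beta + \gamma \le 1$ and so never crosses the threshold. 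I would therefore dispatch $K_2$ by hand, invoking the convention that a representation must place its two vertices at \emph{distinct} points of $\mathbb{R}^n$, which is impossible for $n = 0$; thus $\dim(K_2 - e) = 1 = \dim(K_2)$ and $K_2$ is not dimension-critical.

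The step I expect to be the crux is the lower bound $\dim(G-e) \ge \dim(G)$, since constructions only ever yield upper bounds. The mechanism above converts this into the purely combinatorial task of spotting a complete multipartite subgraph of the right dimension, which Theorem~\ref{hiroshi} then evaluates automatically; the one point demanding care in (ii)--(iv) is that the relevant object is a \emph{proper} subgraph of $G - a_1 p_1$ rather than $G - a_1 p_1$ itself, so I must verify that every edge of $G(\alpha-1, \beta-1, \gamma+1)$ is genuinely present in $G - a_1 p_1$, i.e.\ that each non-edge of $G - a_1 p_1$ lies inside one of the new parts. The only remaining friction is the bookkeeping at the boundary value $K_2$, where the convention for placing isolated vertices must be stated explicitly rather than read off Theorem~\ref{hiroshi}.
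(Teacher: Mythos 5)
Your proposal is correct and takes essentially the same route as the paper: in each case one exhibits a complete multipartite subgraph of $G - e$ whose dimension, read off from Theorem~\ref{hiroshi}, equals $\dim(G)$, and concludes by subgraph monotonicity. The only cosmetic difference is that the paper deletes a vertex in case (ii) and both edges $a_1b_1$, $a_1c_1$ in cases (iii)--(iv) so as to land exactly on a complete multipartite graph, whereas you delete a single edge and pass to a proper subgraph of $G-e$; your handling of (i) and (v) coincides with the paper's.
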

\begin{proof} We consider each of these cases individually, and apply Theorem \ref{hiroshi} throughout.
\begin{enumerate}
\item[(i)] Quite obviously $\dim(K_2) = 1$, however deletion of the only edge of $K_2$ results in a graph just consisting of two isolated vertices which cannot be embedded in $\mathbb{R}^0$ (which by convention consists of a single point).  So $K_2$ is not dimension-critical.

\item[(ii)] Let $G = G(\alpha, 1 , 0)$ for $\alpha \geq 1$.  Then $\dim(G) = \alpha$.  Letting $v \in V(G)$ where $v$ is contained in the part of size 2, $G \setminus \{v\}$ is equal to $K_{\alpha + 1}$.  Since $\dim(K_{\alpha + 1}) = \alpha$, we have that $G$ is not dimension-critical.

\item[(iii)] Let $G = G(\alpha, 1, 1)$ for $\alpha \geq 1$, and note that $\dim(G) = \alpha + 3$.  Label the partite sets of $G$ as $\{a_1\}, \ldots, \{a_\alpha\}, \{b_1,c_1\}, \{b_2,c_2,d_2\}$.  Form a new graph $G'$ by deleting from $G$ the edges $a_1b_1$ and $a_1c_1$.  Observe that $G' = G(\alpha - 1, 0, 2)$ and $\dim(G') = \alpha + 3$.  Again, we have that $G$ is not dimension-critical.

\item[(iv)] Let $G = G(\alpha, 2 , 0)$ for $\alpha \geq 1$, and note that $\dim(G) = \alpha + 2$.  Just as in the last case, let $G' = G \setminus \{a_1b_1, a_1c_1\}$ and note that $G' = G(\alpha - 1, 1, 1)$.  We have that $\dim(G') = \alpha + 2$ which implies that $G$ is not dimension-critical.

\item[(v)] Finally, let $G = G(\alpha, 0 , 1)$ for $\alpha \geq 2$, which gives $\dim(G) = \alpha + 1$.  Let $\{a_1\}$ and $\{a_2\}$ both be partite sets of size 1, and let $G'$ be formed by deleting edge $a_1a_2$ from $G$.  Then $G' = G(\alpha - 2, 1, 1)$ and $\dim(G') = \alpha + 1$.  We conclude that $G$ is not dimension-critical.\qed
\end{enumerate}
\end{proof}

Theorem \ref{notcritical} concludes that the graphs shown to be dimension-critical by Theorem \ref{if} are in fact the only dimension-critical complete multipartite graphs.

\section{Arbitrarily Large Dimension-critical Graphs}

In this section, we show that for any $n \geq 2$, there exists an arbitrarily large dimension-critical graph $G$ with $\dim(G) = n$.  This is immediate for $n = 2$ as the cycle $C_k$ is of dimension 2 for any $k \geq 3$, and deletion of any edge of $C_m$ results in a path which has a unit-distance representation on the real number line $\mathbb{R}$.  In \cite{boza}, Boza and Revuelta construct an arbitrarily large dimension-critical graph of dimension 3.  However, the authors of \cite{boza} do not comment on the existence of such graphs in higher dimensions, and it does not appear that their construction has a clear generalization.

We will obtain our result by considering the graph $G = K_n + C_m$.  That is, $G$ is formed by starting with the cycle $C_m$ for some $m \geq 3$, then placing $n$ vertices adjacent to each other and to each of the vertices of the copy of $C_m$.  Along the way, we will employ a number of lemmas and theorems of a geometric sort.  Lemma \ref{erdos1} is observed in the previously mentioned \cite{eht}. 

\begin{lemma} \label{erdos1} For any $n \geq 1$, $\dim(K_n) = n - 1$.
\end{lemma}

Regarding Lemma \ref{erdos1}, it is well-known that if $n$ points are equidistant in $\mathbb{R}^{n-1}$, then these points must constitute the vertices of a regular $(n-1)$-dimensional simplex.  This representation of $K_n$ in $\mathbb{R}^{n-1}$ is unique up to Euclidean movements -- that is, unique up to rotations, reflections, and translations.  This fact will play a key role in our current work.  Lemma \ref{simplex} is also well-known and can be found, for example, in \cite{brass}.  

\begin{lemma} \label{simplex} Let $S$ be a regular $n$-dimensional simplex embedded on a unit sphere.  Then for any vertices $P_1$, $P_2$ of $S$, $|P_1 - P_2| = \sqrt{2 + \frac{2}{n}}$.
\end{lemma}

Since, in any representation of a graph in $\mathbb{R}^n$, we require edges to be of unit length, a quick calculation allows Lemma \ref{simplex} to be restated as the following corollary.

\begin{corollary} \label{sphere} Let $K_n$ have a unit-distance representation in $\mathbb{R}^{n-1}$ on a sphere of radius $r$.  Then $r = \sqrt{\frac{n-1}{2n}}$.
\end{corollary}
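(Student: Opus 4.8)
The plan is to reduce everything to Lemma \ref{simplex} by keeping careful track of one index shift. The key observation is that a unit-distance representation of $K_n$ consists of $n$ mutually equidistant points, and by the uniqueness statement accompanying Lemma \ref{erdos1}, these points are exactly the vertices of a regular $(n-1)$-dimensional simplex (uniquely determined up to Euclidean movements). So the ``sphere of radius $r$'' in the statement is simply the circumscribed sphere of this simplex, whose radius $r$ equals the common distance from the centroid to each vertex and is independent of whether we view the simplex inside its own affine span or inside the larger space $\mathbb{R}^n$.

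Next I would invoke Lemma \ref{simplex}, but with the dimension of the simplex being $n-1$ rather than $n$. That is, since $K_n$ is a regular $(n-1)$-simplex, substituting $n-1$ for the parameter in Lemma \ref{simplex} shows that when this simplex is inscribed in a sphere of radius $1$, any two of its vertices are at distance $\sqrt{2 + \frac{2}{n-1}}$. A short simplification gives
\[
\sqrt{2 + \frac{2}{n-1}} = \sqrt{\frac{2n}{n-1}},
\]
so on a unit sphere the edge length of $K_n$ would be $\sqrt{\frac{2n}{n-1}}$.

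Finally I would scale. Distances scale linearly with the radius of the circumscribing sphere, so if $K_n$ instead sits on a sphere of radius $r$, each edge has length $r\sqrt{\frac{2n}{n-1}}$. Because we are working with a unit-distance representation, this edge length equals $1$, and solving $r\sqrt{\frac{2n}{n-1}} = 1$ yields $r = \sqrt{\frac{n-1}{2n}}$, as claimed. One can sanity-check the formula against familiar small cases: $n=2$ gives $r = 1/2$ (two unit-separated points lying at the ends of a diameter), $n=3$ gives $r = 1/\sqrt{3}$ (the circumradius of a unit equilateral triangle), and $n=4$ gives $r = \sqrt{3/8}$ (the circumradius of a unit regular tetrahedron).

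The only genuine subtlety here is bookkeeping: the statement concerns $K_n$, which is a regular $(n-1)$-simplex, so Lemma \ref{simplex} must be applied with its parameter set to $n-1$, not $n$. Beyond that index shift and the linear scaling argument, the computation is entirely routine, so I do not anticipate any real obstacle.
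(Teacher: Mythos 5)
Your proposal is correct and is essentially the paper's own argument: the paper derives the corollary as ``a quick calculation'' from Lemma \ref{simplex}, which is precisely the index-shifted application ($K_n$ being a regular $(n-1)$-simplex) and linear rescaling that you carry out explicitly. Your sanity checks for small $n$ are a nice bonus but the route is the same.
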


Theorem \ref{angle} is found in \cite{varona} and will be implemented in the proof of Lemma \ref{nocycle} below.

\begin{theorem} \label{angle} Let $r \in \mathbb{Q}$ with $0 \leq r \leq 1$.  The number $\frac{1}{\pi}\arcsin(\sqrt{r})$ is rational if and only if $r$ is equal to  $0$, $\frac14$, $\frac12$, $\frac34$, or 1.
\end{theorem}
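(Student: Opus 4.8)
The plan is to reduce the statement to Niven's theorem, which asserts that if $\phi$ is a rational multiple of $\pi$ and $\cos\phi$ is rational, then $\cos\phi \in \{0, \pm\tfrac12, \pm 1\}$. First I would set $\theta = \arcsin(\sqrt{r})$, so that $\sin^2\theta = r$ and, by the double-angle identity, $\cos(2\theta) = 1 - 2\sin^2\theta = 1 - 2r$. Suppose $\frac{1}{\pi}\arcsin(\sqrt{r}) = \frac{\theta}{\pi}$ is rational. Then $\frac{2\theta}{\pi}$ is rational as well, so $2\theta$ is a rational multiple of $\pi$, and since $r \in \mathbb{Q}$ the value $\cos(2\theta) = 1 - 2r$ is rational. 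Niven's theorem then forces $1 - 2r \in \{0, \pm\tfrac12, \pm 1\}$, and solving each of these equations for $r$ yields exactly $r \in \{0, \tfrac14, \tfrac12, \tfrac34, 1\}$, which is the forward direction.

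For the converse, one checks the five values directly: $\arcsin(\sqrt{0}) = 0$, $\arcsin(\tfrac12) = \tfrac{\pi}{6}$, $\arcsin(\tfrac{1}{\sqrt2}) = \tfrac{\pi}{4}$, $\arcsin(\tfrac{\sqrt3}{2}) = \tfrac{\pi}{3}$, and $\arcsin(1) = \tfrac{\pi}{2}$, so that $\frac{1}{\pi}\arcsin(\sqrt{r})$ equals $0, \tfrac16, \tfrac14, \tfrac13, \tfrac12$ respectively, all of which are rational.

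The substantive content, and the main obstacle, is Niven's theorem itself. The route I would take is to show that whenever $\phi$ is a rational multiple of $\pi$, the quantity $2\cos\phi$ is an algebraic integer. Writing $x = 2\cos\phi$, the recurrence $2\cos((k+1)\phi) = (2\cos\phi)(2\cos(k\phi)) - 2\cos((k-1)\phi)$ turns, by an easy induction, into the statement that $2\cos(k\phi) = P_k(x)$ for a monic integer polynomial $P_k$ (a Chebyshev-type polynomial), with $P_0 = 2$ and $P_1 = x$. Writing $\phi = \tfrac{p}{q}\pi$ and evaluating at $k = q$ gives $P_q(x) = 2\cos(q\phi) = 2\cos(p\pi) = \pm 2$, so $x = 2\cos\phi$ satisfies a monic polynomial equation with integer coefficients and is therefore an algebraic integer. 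If in addition $\cos\phi$ is rational, then $2\cos\phi$ is a rational algebraic integer, hence an ordinary integer by the rational root theorem; since $|2\cos\phi| \leq 2$, we obtain $2\cos\phi \in \{-2,-1,0,1,2\}$ and thus $\cos\phi \in \{-1,-\tfrac12,0,\tfrac12,1\}$.

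The only delicate point is this algebraic-integer argument underlying Niven's theorem: one must correctly set up the Chebyshev recurrence and invoke the fact that a rational algebraic integer is an integer. Everything else, namely the double-angle reduction and the finite verification of the converse, is routine. Since Niven's theorem is classical, the argument effectively collapses to the single observation that $\cos(2\theta) = 1 - 2r$ transports the problem into exactly the hypothesis of Niven's theorem, and that is the one idea doing all the work.
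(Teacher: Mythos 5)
Your proof is correct. Note that the paper does not prove this statement at all: it is imported verbatim from Varona's paper (reference \cite{varona} in the bibliography), so there is no in-paper argument to compare against. Your reduction is the standard one and it is sound: the double-angle identity $\cos(2\theta) = 1 - 2\sin^2\theta = 1 - 2r$ converts the hypothesis into exactly the setting of Niven's theorem, the five solutions of $1 - 2r \in \{0, \pm\tfrac12, \pm1\}$ are precisely the claimed values of $r$, and the converse is a finite check of standard angles. Your sketch of Niven's theorem itself is also complete in its essentials: the recurrence $2\cos((k+1)\phi) = (2\cos\phi)(2\cos(k\phi)) - 2\cos((k-1)\phi)$ with $P_0 = 2$, $P_1 = x$ does produce monic integer polynomials $P_k$ of degree $k$ (for $k \geq 1$), evaluating at $k = q$ shows $2\cos\phi$ is a root of the monic integer polynomial $P_q(x) \mp 2$, and the rational root theorem plus the bound $|2\cos\phi| \leq 2$ finishes it. The one point worth stating explicitly is that you may take $q \geq 1$ in writing $\phi = \tfrac{p}{q}\pi$, so that $P_q$ genuinely has positive degree; this is immediate but should be said. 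In short: a correct, self-contained proof of a result the paper only cites.
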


\begin{lemma} \label{nocycle} Let $S$ be a circle of radius $r = \sqrt{\frac{n + 1}{2n}}$ for some integer $n \geq 2$.  Then no cycle of edge-length 1 is embeddable on $S$.
\end{lemma}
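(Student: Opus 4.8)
The plan is to argue by contradiction: suppose some cycle $C_m$ with $m \ge 3$ admits a unit-distance embedding on $S$, realized by distinct points $P_1, \dots, P_m$ on the circle with $|P_i - P_{i+1}| = 1$ for each $i$ (indices read cyclically). Since $S$ has radius $r$ and $r^2 = \frac{n+1}{2n} > \frac12$ guarantees $2r > 1$, a unit chord exists and subtends a central angle $\theta \in (0,\pi)$ determined by $2r\sin(\theta/2) = 1$. The first step is to record the exact value $\sin^2(\theta/2) = \frac{1}{4r^2} = \frac{n}{2(n+1)}$, obtained by substituting $r^2 = \frac{n+1}{2n}$.

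The pivotal step is to invoke Theorem \ref{angle}. Writing $\frac{\theta}{2\pi} = \frac{1}{\pi}\arcsin\!\sqrt{\tfrac{n}{2(n+1)}}$ (valid since $\theta/2 \in (0,\pi/2)$), I note that $\frac{n}{2(n+1)}$ is a rational number in $(0,1)$, and a short check shows that for every integer $n \ge 2$ it equals none of $0, \frac14, \frac12, \frac34, 1$. Theorem \ref{angle} then forces $\frac{\theta}{2\pi}$, and hence $\frac{\theta}{\pi}$, to be irrational; that is, $\theta$ is an irrational multiple of $\pi$.

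With this in hand I would translate ``the cycle closes up'' into an arithmetic condition. Placing $P_1$ at angle $0$ and reading positions as angles modulo $2\pi$, each edge advances the angular coordinate by $\epsilon_i \theta$ with $\epsilon_i \in \{+1,-1\}$ (the two signs corresponding to the two directions in which a unit chord may be traversed), so returning to $P_1$ after $m$ edges requires $\bigl(\sum_{i=1}^m \epsilon_i\bigr)\theta \equiv 0 \pmod{2\pi}$. Because $\frac{\theta}{2\pi}$ is irrational, this can hold only when $\sum_{i=1}^m \epsilon_i = 0$. Moreover, again by irrationality, two vertices coincide precisely when their integer partial sums $s_i = \sum_{j<i}\epsilon_j$ agree, so the hypothesis that $P_1,\dots,P_m$ are distinct means $s_1,\dots,s_m$ are distinct integers.

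It remains to derive a contradiction, and this is the step I expect to be the crux. The sequence $s_1, \dots, s_m$ is a walk on $\mathbb{Z}$ with steps $\pm 1$; a walk taking pairwise distinct values can never reverse direction (a reversal immediately repeats a value), so it must be monotone, whence $|s_m| = m - 1 \ge 2$. But the closing condition $\sum_{i=1}^m \epsilon_i = 0$ together with $s_m = \sum_{i=1}^{m-1}\epsilon_i$ forces $|s_m| = |\epsilon_m| = 1$, a contradiction. Hence no such embedding exists. The main obstacle throughout is bookkeeping the two traversal directions and the mismatch between the cyclic order of $C_m$ and the circular order of the points; once that is absorbed into the signs $\epsilon_i$, the irrationality supplied by Theorem \ref{angle} does the decisive work.
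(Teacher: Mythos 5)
Your proposal is correct, and its core is the same as the paper's: compute $\sin^2(\theta/2)=\frac{1}{4r^2}=\frac{n}{2(n+1)}$ for the central angle $\theta$ of a unit chord, observe that this rational value never lies in $\{0,\frac14,\frac12,\frac34,1\}$ for $n\geq 2$, and invoke Theorem \ref{angle}. The one genuine difference is that the paper passes directly from ``$C_m$ embeds on $S$'' to the closing condition $m\theta=z(2\pi)$, which tacitly assumes every edge advances the angular coordinate in the same rotational direction; you instead allow each edge to rotate by $\epsilon_i\theta$ with $\epsilon_i\in\{\pm 1\}$, use the irrationality of $\theta/\pi$ supplied by Theorem \ref{angle} to reduce both the closing condition and vertex distinctness to statements about the integer partial sums $s_i$, and finish with the observation that a $\pm 1$ walk with distinct values must be monotone and hence cannot close up. This extra bookkeeping is not wasted effort: it covers the back-and-forth configurations the paper's one-line closing condition silently excludes, so your argument is, if anything, the more complete of the two.
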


\begin{proof} Consider a cycle $C_m$, and assume to the contrary that $C_m$ is embeddable on $S$.  We then must have that, for some integer $z \in \mathbb{Z}^+$, the angle $\theta$ given in Figure \ref{angle} satisfies $m\theta = z(2\pi)$.

\begin{figure}[h]%
\begin{center}
\includegraphics[scale=.65]{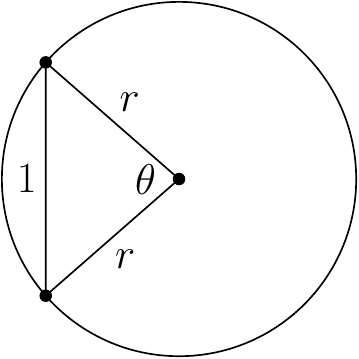}
\caption{}%
\label{angle}%
\end{center}
\end{figure}

\noindent Solving for $\theta$, we have $\sin(\frac{\theta}{2}) = \frac{1}{2r}$.  Combining this with the equality given above, we have that\\ $\frac{1}{\pi}\arcsin \left(\sqrt{\frac{n}{2(n+1)}}\right) = \frac{z}{m}$, or in other words, $\frac{1}{\pi}\arcsin \left(\sqrt{\frac{n}{2(n+1)}}\right)$ is rational.  By Theorem \ref{angle}, we see $\frac{n}{2(n+1)} \in \{\frac14, \frac12, \frac34, 1\}$.  However, letting $f(x) = \frac{x}{2(x+1)}$, we have $f'(x) = \frac{1}{2(x+1)^2}$ which implies that $f(x)$ is strictly increasing.  Since $f(2) = \frac13$ and $\ds \lim_{n\to \infty} f(n) =$ $\frac12$, we have a contradiction.\qed
\end{proof}

We now determine the dimension of $G = K_n + C_m$.

\begin{theorem} \label{knpluscm} Let $G = K_n + C_m$ for $m \geq 3$, $n \geq 2$.  Then $\dim(G) = n + 2$.
\end{theorem}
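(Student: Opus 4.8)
The plan is to prove the two inequalities $\dim(G) \le n+2$ and $\dim(G) \ge n+2$ separately, both resting on the same geometric picture. In any unit-distance representation of $G$, the $n$ mutually adjacent vertices of the $K_n$ factor sit at mutual distance $1$, so by Lemma~\ref{erdos1} and the uniqueness remark following it they form a regular $(n-1)$-simplex spanning an $(n-1)$-dimensional affine subspace $A$, with circumcenter $O$ and, by Corollary~\ref{sphere}, circumradius $r = \sqrt{\frac{n-1}{2n}}$. Every vertex $v$ of the $C_m$ factor is adjacent to all $n$ of these vertices, hence is at distance $1$ from each simplex vertex and in particular equidistant from them all. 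Such a point must lie on the affine subspace $L$ through $O$ orthogonal to $A$, and the Pythagorean relation $|v-O|^2 + r^2 = 1$ forces it onto the sphere in $L$ of radius $\sqrt{1-r^2} = \sqrt{\frac{n+1}{2n}}$ centered at $O$. Thus the whole cycle is confined to this sphere, whose radius is exactly the value appearing in Lemma~\ref{nocycle}.

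For the lower bound, suppose $G$ embeds in $\mathbb{R}^{n+1}$. Then $L$ has dimension $(n+1)-(n-1) = 2$, so the locus above is a circle of radius $\sqrt{\frac{n+1}{2n}}$, and the $m$ cycle vertices together with their consecutive unit-length edges constitute a unit-distance copy of $C_m$ embedded on that circle. This directly contradicts Lemma~\ref{nocycle}, so $G$ is not representable in $\mathbb{R}^{n+1}$ and $\dim(G) \ge n+2$.

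For the upper bound, I would exhibit an explicit representation in $\mathbb{R}^{n+2}$. Placing the regular simplex $K_n$ in a coordinate $(n-1)$-flat, the subspace $L$ now has dimension $3$, so the locus of points at distance $1$ from all simplex vertices is a genuine $2$-sphere $\Sigma$ of radius $R = \sqrt{\frac{n+1}{2n}}$, and it suffices to embed $C_m$ on $\Sigma$ with all edges of length $1$. Since $R > \tfrac{1}{\sqrt 2} > \tfrac12$ for every finite $n$, unit chords comfortably fit on $\Sigma$. For $m$ odd I would place the vertices on a single latitude circle of $\Sigma$ as the regular star polygon of winding number $\frac{m-1}{2}$, which is coprime to $m$ and keeps the required latitude radius below $R$; for even $m$ a single latitude is not always enough (the case $m=6$ already fails, since the only admissible winding forces a circle of radius $1 > R$), so in those cases I would instead distribute the vertices over two symmetric latitudes in an ``antiprism'' pattern, solving for the common latitude radius and height that make each edge have length exactly $1$.

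The main obstacle is this last construction. The lower-bound half reduces cleanly to Lemma~\ref{nocycle} once the simplex-uniqueness and orthogonality bookkeeping is in place, whereas producing a unit-distance $C_m$ on the fixed-radius sphere $\Sigma$ uniformly in $m$ requires a short case analysis, since no single coplanar placement works for every $m$. I expect the bulk of the effort to go into verifying that the odd case, the even case with $m \ge 6$, and the leftover $m=4$ each admit an explicit placement with all edges of length exactly $1$ and all vertices distinct, thereby certifying $\dim(G) \le n+2$ and, combined with the lower bound, completing the proof.
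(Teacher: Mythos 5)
Your lower-bound argument is exactly the paper's: rigidity of the regular $(n-1)$-simplex pins the $K_n$, every cycle vertex is forced onto the circle of radius $\sqrt{\tfrac{n+1}{2n}}$ in the orthogonal $2$-flat, and Lemma~\ref{nocycle} finishes. Where you genuinely diverge is the embedding in $\mathbb{R}^{n+2}$. The paper avoids explicit trigonometry entirely: it shows by an intermediate-value argument that any two points of the $2$-sphere $\Sigma$ at distance at most $1$ have a common unit-distance neighbor on $\Sigma$, then builds an odd cycle by laying $w_1, w_3, \ldots, w_m$ along a great-circle arc and inserting the even-indexed vertices as apexes, and handles even $m$ by subdividing one edge of an odd cycle. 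Your construction is explicit and symmetric instead, and the computations do close: for odd $m$ the star polygon of winding number $\tfrac{m-1}{2}$ (which is coprime to $m$, so it is a single $m$-cycle) needs a latitude circle of radius $\tfrac{1}{2\cos(\pi/2m)} \le \tfrac{1}{\sqrt{3}} < \tfrac{1}{\sqrt{2}} < R$, so it fits; for even $m \ge 6$ the antiprism condition reduces to $2R\sin(\pi/m) \le 1$, which holds because $R \le \tfrac{\sqrt{3}}{2}$ for $n \ge 2$ and $\sin(\pi/m) \le \tfrac12$. The one loose end is $m=4$, which you flag but do not settle: there you should either exhibit the rhombus (two vertices of $\Sigma$ at distance $d<1$ together with their two distinct common unit-neighbors on $\Sigma$) or simply borrow the paper's continuity lemma, which handles it in one line. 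The trade-off between the two routes is clear: the paper's soft argument is uniform in $m$ and needs no coprimality or latitude bookkeeping, while yours produces concrete coordinates at the cost of a three-way case split and that dangling small case.
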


\begin{proof} Label the vertices of $K_n$ as $a_1, \ldots, a_n$ and those of $C_m$ as $w_1, \ldots, w_m$.  Our first goal is to find an embedding of $G$ in $\mathbb{R}^{n+2}$.  We can represent $a_1, \ldots, a_n$ as the vertices of a regular $(n-1)$-dimensional simplex of edge-length 1 centered at the origin.  Let $r_1$ be the radius of this simplex, and by Corollary \ref{sphere}, we have $r_1 = \sqrt{\frac{n-1}{2n}}$.  Each of the vertices $w_1, \ldots, w_m$ will then be represented as $\mathbb{R}^{n+2}$ points of the form $(0, \ldots, 0, x_i, y_i, z_i)$ where $x_i^2 + y_i^2 + z_i^2 = 1 - r_1^2$ for $i \in \{1, \ldots, m\}$.  Let $r_2 = \sqrt{1 - r_1^2}$, and note that $r_2 = \sqrt{\frac12 + \frac{1}{2n}}$.  To complete our embedding of $G$ in $\mathbb{R}^{n+2}$, it now suffices to show that the cycle $C_m$ is representable in $\mathbb{R}^3$ with each of its vertices lying on a sphere of radius $r_2$.

Designate by $S$ a sphere of radius $r_2$.  First, we claim that for any points $P_1$, $P_2$ lying on $S$ with $|P_1 - P_2| = 1$, there exists a point $P_3$ on $S$ at distance 1 from each of $P_1$, $P_2$.  To see this, we will show that there exists a point on $S$ at distance less than 1 from each of $P_1$, $P_2$ and also a point on $S$ at distance greater than 1 from each of $P_1$, $P_2$, whereby continuity guarantees the existence of the desired $P_3$.  Consider the great circle of $S$ containing both $P_1$ and $P_2$, and then label distances as in Figure \ref{circle} below.

\begin{figure}[h]%
\begin{center}
\includegraphics[scale=.65]{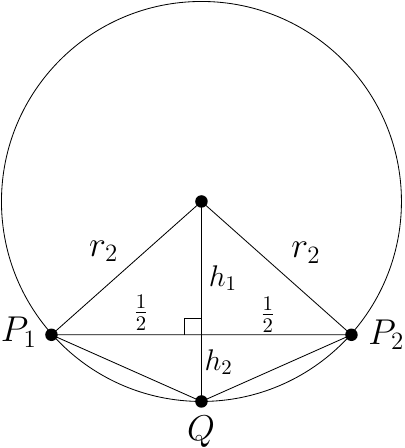}
\caption{}%
\label{circle}%
\end{center}
\end{figure}

\noindent We have the relationships $h_1 + h_2 = r_2$, $h_1 = \sqrt{r_2^2 - \frac14}$, and $|P_1 - Q|^2 = |P_2 - Q|^2 = h_2^2 + \frac14$.  We claim that $|P_1 - Q|^2 < 1$ which amounts to showing that $h_2 < \frac{\sqrt3}{2}$.  To see this, combine the above equalities to write $h_2 = r_2 - \sqrt{r_2^2 - \frac14}$.  Letting $f(x) = x - \sqrt{x^2 - \frac14}$, we have $f'(x) = 1 - \frac{x}{\sqrt{x^2 - \frac14}} < 0$ which implies $f(x)$ is decreasing.  Since $\frac{\sqrt2}{2} < r_2$, and $f(\frac{\sqrt2}{2}) = \frac{\sqrt2 - 1}{2} < \frac{\sqrt3}{2}$, we have established that there is a point on $S$ at a distance less than 1 from each of $P_1$ and $P_2$.  To see that there is a point on $S$ at a distance greater than 1 from each of $P_1$ and $P_2$, just take an endpoint of the diameter of $S$ that is orthogonal to the plane containing this great circle.  The distance from this point to each of $P_1$ and $P_2$ is $r_2\sqrt{2}$ which is greater than 1.  This completes proof of our claim.  We note also that a similar argument shows that for any two points on $S$ that are a distance less than 1 apart, there is a point on $S$ at distance 1 from each of them as well.

Now, to embed a cycle $C_m$ on $S$, we perform the following procedure.  If $m$ is odd, place $w_1, w_3, w_5, \ldots, w_m$ on a great circle of $S$ where $w_1$ and $w_m$ are a Euclidean distance 1 apart, and $w_3, \ldots, w_{m-2}$ lie on the arc of the great circle between $w_1$ and $w_m$.  For each pair of consecutive vertices in $\{w_1, w_3, \ldots, w_m\}$, there is a point on $S$ at distance one from each of them.  We may select these points to be the $w_2, w_4, \ldots, w_{m-1}$ and we have completed the embedding.  If $m$ is even, we may embed the cycle $C_{m-1}$ in the fashion as just described, then delete the edge $w_1w_2$, and place new edges $w_1P$ and $Pw_2$ where vertex $P$ is a point on $S$ at distance 1 from each of $w_1$ and $w_2$.  This completes the proof that $G$ is representable in $\mathbb{R}^{n+2}$.

Now suppose to the contrary that $G$ is embeddable in $\mathbb{R}^{n+1}$.  In such an embedding, we again have that $a_1, \ldots, a_n$ must be represented as the vertices of a regular $(n-1)$-dimensional simplex of edge-length 1 which we may freely assume is centered at the origin.  It follows that each of $w_1, \ldots, w_m$ must be represented as $\mathbb{R}^{n+1}$ points of the form $(0, \ldots, 0, x_i, y_i)$ where $x_i^2 + y_i^2 = \frac{n + 1}{2n}$.  In other words, the cycle $C_m$ must have a unit-distance representation on a circle of radius $\sqrt{\frac{n + 1}{2n}}$.  This contradicts Lemma \ref{nocycle}.\qed
\end{proof}

We are now ready for the main result of this section.

\begin{theorem} Let $n \geq 2$, and $c \in \mathbb{Z}^+$.  Then there exists a dimension-critical graph $H$ satisfying $\dim(H) = n+2$ and $|E(H)| > c$.
\end{theorem}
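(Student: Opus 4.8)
The plan is to take $H = K_n + C_m$ for a sufficiently large $m$ to be specified at the end. Theorem~\ref{knpluscm} already gives $\dim(H) = n+2$ regardless of $m$, and since $|E(H)| = \binom{n}{2} + nm + m$ grows without bound in $m$, any large enough $m$ forces $|E(H)| > c$. The entire content is therefore to show that $H$ is dimension-critical, i.e. that deleting any edge drops the dimension to at most $n+1$. For $m \geq 4$ the two vertex-classes have distinct degrees, so the symmetries $S_n \times D_m \leq \mathrm{Aut}(H)$ collapse the edges into at most three orbits (clique edges, cycle edges, and join edges), and it suffices to embed $H - e$ in $\mathbb{R}^{n+1}$ for one representative of each. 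Throughout I would reuse the skeleton from the proof of Theorem~\ref{knpluscm}: place $a_1, \dots, a_n$ as a regular simplex in the first $n-1$ coordinates, so that the locus of points at unit distance from all of them is the circle $S$ of radius $r_2 = \sqrt{(n+1)/(2n)}$ in the last two coordinates.

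For a \emph{cycle} edge $w_1w_2$, deletion turns $C_m$ into a path, and a path carries no closing-up constraint. I would lay $w_2, w_3, \dots, w_m, w_1$ at successive angular steps $\theta = 2\arcsin(1/(2r_2))$ around $S$. Since $\theta/2\pi$ is irrational (this is exactly the computation underlying Lemma~\ref{nocycle}), these points are distinct, every path edge has unit length, and the join edges hold automatically; this is an embedding in $\mathbb{R}^{n+1}$. For a \emph{join} edge, say $a_n w_1$, the vertices $w_2, \dots, w_m$ remain pinned to $S$, but $w_1$ need only lie at unit distance from $a_1, \dots, a_{n-1}$; that locus is a $2$-sphere $S_1$ of radius $\sqrt{n/(2(n-1))}$ which in fact \emph{contains} $S$. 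I would place $w_2, \dots, w_m$ along $S$ as a path whose endpoints $w_2, w_m$ lie at distance at most $1$ apart — achievable for large $m$ by choosing the $\pm\theta$ turn pattern and invoking equidistribution of $\{j\theta\}$ — and then apply the continuity argument from the proof of Theorem~\ref{knpluscm} essentially verbatim (its radius hypothesis holds since $\sqrt{n/(2(n-1))} \in (\tfrac{\sqrt2}{2},1]$) to produce a point $w_1 \in S_1$ at unit distance from both endpoints.

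The \emph{clique} edge $a_1a_2$ is the crux and the step I expect to fight with. Here every join edge survives, so all of $w_1, \dots, w_m$ remain trapped on a one-dimensional locus and the cycle cannot be relaxed to a path; the only new freedom is the now-unconstrained distance $\delta = |a_1 - a_2|$. My plan is to realize $a_3, \dots, a_n$ as a $K_{n-2}$-simplex and confine $a_1, a_2$ together with the cycle to the resulting $3$-sphere $T$ of radius $\sqrt{(n-1)/(2(n-2))}$; the set of points of $T$ at unit distance from both $a_1$ and $a_2$ is then a circle $\mathcal{C}$ whose radius $\rho_c(\delta)$ varies continuously with $\delta$ over an interval about $r_2$. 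For $m$ large the values $1/(2\sin(\pi k/m))$ with $1 \leq k < m/2$ are $O(1/m)$-dense, so some $k$ coprime to $m$ has $1/(2\sin(\pi k/m))$ inside that interval; fixing $\delta$ so that $\rho_c(\delta)$ equals this value lets me embed $C_m$ as the simple star polygon $\{m/k\}$ on $\mathcal{C}$, completing the embedding in $\mathbb{R}^{n+1}$.

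The two genuinely delicate points, and where I expect the real work to concentrate, are both internal to this last case: verifying that $\rho_c$ is actually non-constant near $\delta = 1$ (so that it sweeps an honest interval rather than sitting fixed at $r_2$, which would reproduce the obstruction of Lemma~\ref{nocycle}), and confirming that coprimality of $k$ with $m$ can be arranged simultaneously with landing inside that interval. Once all three orbit representatives are embedded in $\mathbb{R}^{n+1}$, each deletion satisfies $\dim(H - e) \leq n+1 < n+2 = \dim(H)$, so $H$ is dimension-critical, and choosing $m$ large enough to also guarantee $|E(H)| > c$ finishes the proof.
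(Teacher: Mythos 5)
You are solving a harder problem than the theorem requires, and this is the main divergence from the paper. The paper never shows that the join edges $a_iw_j$ or the clique edges $a_ia_j$ of $G = K_n + C_m$ are critical; it only establishes (exactly as in your first case) that every cycle edge is critical, and then invokes a generic pruning argument: iteratively delete edges that are not critical to the dimension until none remain. Each such deletion preserves the dimension, and an edge that is critical stays critical after a dimension-preserving deletion (if $\dim(G-e) < \dim(G)$ and $\dim(G-f) = \dim(G)$, then $\dim(G-f-e) \leq \dim(G-e) < \dim(G) = \dim(G-f)$), so all $m > c$ cycle edges survive into the resulting dimension-critical graph $H$, which therefore satisfies $\dim(H) = n+2$ and $|E(H)| > c$. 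This completely sidesteps your second and third cases. What your route would buy, if completed, is the genuinely stronger statement that $K_n + C_m$ itself is dimension-critical for suitable $m$; what the paper's route buys is a two-line finish with no need to analyze the other edge orbits or to exhibit $H$ explicitly.

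On the unfinished parts of your route: the two delicate points you flag in the clique-edge case do resolve for $n \geq 3$. Centering $T$ at the origin with $R^2 = \frac{n-1}{2(n-2)}$, one computes $\rho_c(\delta)^2 = R^2 - \frac{(2R^2-1)^2}{4R^2 - \delta^2}$, and since $2R^2 - 1 = \frac{1}{n-2} \neq 0$ this is strictly monotonic in $\delta$, hence sweeps an honest interval about $r_2$ as $\delta$ varies about $1$; taking $m$ a large prime makes every $1 \leq k < m$ coprime to $m$, and some such $k$ then puts $\frac{1}{2\sin(\pi k/m)}$ inside that interval. However, your formula for $T$ degenerates at $n = 2$ (there is no $K_{n-2}$ simplex and $\sqrt{(n-1)/(2(n-2))}$ is undefined), so that case needs a separate, easier argument in $\mathbb{R}^3$, where the locus for the $w_i$ is directly a circle of radius $\sqrt{1 - \delta^2/4}$. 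The join-edge case also still owes a precise argument that a $\pm\theta$ walk of the required length and parity returns to within angular distance $\theta$ of its start. None of these is fatal, but as written the proof is incomplete in exactly the places the paper's pruning device renders unnecessary.
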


\begin{proof} Again, consider the graph $G = K_n + C_m$ where $m > c$, and label the vertices of $G$ as in the proof of Theorem \ref{knpluscm}.  For any edge of the form $w_iw_j$, there is an automorphism of $G$ mapping that edge to $e = w_1w_m$.  We aim to show then that $e$ is critical to the dimension of $G$ -- in other words, that $\dim(G) > \dim(G - e)$.  In light of Theorem \ref{knpluscm}, this amounts to showing that $G - e$ is representable in $\mathbb{R}^{n+1}$.

Just as in the proof of Theorem \ref{knpluscm}, we represent $a_1, \ldots , a_n$ as vertices of a regular $(n-1)$-dimensional simplex which is centered at the origin and has radius $r_1 = \sqrt{\frac{n-1}{2n}}$.  Each of the vertices $w_1, \ldots , w_m$ will be points of the form $(0, \ldots, 0, x_i, y_i)$ where $x_i^2 + y_i^2 = \frac{n + 1}{2n}$ for $i \in \{1, \ldots , m\}$.  To see that this does indeed give a valid representation of $G - e$ in $\mathbb{R}^{n+1}$, we need only show that a path of arbitrary length has a unit-distance embedding on a circle, call it $S$, of radius $r_2 = \sqrt{\frac{n + 1}{2n}}$.  Since $r_2 > \frac12$, for any point $p$ on $S$, there are two points on $S$ at distance 1 from $p$.  Since Lemma \ref{nocycle} guarantees that no cycle is embeddable on $S$, we have established that $G - e$ is embeddable in $\mathbb{R}^{n+1}$.  

To then create a dimension-critical graph $H$ with $\dim(H) = n + 2$, start with $G$ and iteratively delete any edges that are not critical to the dimension of the graph.  As observed above, all edges of the form $w_iw_j$ are critical, so no matter how many edges of the form $a_ia_j$ or $a_iw_j$ are deleted, we have that $\dim(H) = n + 2$ and $|E(H)| > c$.\qed 
\end{proof}

\section{Further Work}

In this section we scrape together a few observations and questions that have arisen during our investigations into the topic of dimension-critical graphs.  To the best of our knowledge, each of these are open.  We begin with a question in computational complexity.  A full digression into the terminology, history, and methodology of this subject would take us far afield, so we will make do with assuming some familiarity of our readers, and point those uninitiated to the introductory texts \cite{arora} and \cite{du}.

\begin{question} \label{question1} For an arbitrary graph $G$, what is the complexity of determining whether $G$ is dimension-critical?
\end{question}

In \cite{schaefer}, Schaefer proves that for a general graph $G$, it is NP-complete to determine whether or not $G$ has a unit-distance representation in $\mathbb{R}^2$.  An immediate extension is the fact that it is NP-hard to precisely determine $\dim(G)$.  However, one can also use Schaefer's result to prove that for a given $e \in E(G)$, it is NP-hard to decide if $\dim(G) > \dim(G - e)$.  We do this below.

First, observe that a graph $G$ has a unit-distance representation in $\mathbb{R}$ if and only if $G$ is acyclic and contains no vertices of degree greater than 2 -- in other words, if and only if every component of $G$ is a path.  There are linear-time algorithms for deciding if $G$ has either of these two properties.  Secondly, we note the impossibility of the existence of a graph $H$ with $\dim(H) = 1$ where the creation of a graph $H'$ by placing an edge between two non-adjacent vertices of $H$ results in $\dim(H') > 2$.  This is easy to see considering that $H'$ would have at most one non-path component with that component being a tree, a cycle, or a cycle with one or two paths attached to single vertices of the cycle.  In either case, that component, and by extension the entirety of $H'$, is embeddable in $\mathbb{R}^2$.  

Now suppose to the contrary that there does exist a polynomial-time algorithm to decide whether $e \in E(G)$ is critical to the dimension of $G$.  Label the edges of $G$ as $e_1, \ldots, e_m$ and, starting with $i = 1$, implement this algorithm to decide if $e_i$ is critical.  If it is not, delete $e_i$ from $G$, and implement the algorithm again to decide if $e_{i+1}$ is critical to the dimension of $G \setminus \{e_1, \ldots , e_i\}$.  Eventually we must reach some edge, call it $e_j$, that is critical to the dimension of graph $G' = G \setminus \{e_1, \ldots , e_{j-1}\}$.  We may now run polynomial-time algorithms to decide whether $G'$ is representable in $\mathbb{R}$.  If $\dim(G') = 1$, we have that $\dim(G) = 2$, and if $\dim(G') \neq 1$, we have that $\dim(G) \neq 2$.  The existence of this polynomial-time algorithm to determine whether or not $G$ has a representation in $\mathbb{R}^2$ contradicts Schaefer's result. 

From the above observations, the existence of a polynomial-time algorithm to determine if $G$ is dimension-critical seems very unlikely.  However, we (somewhat abashedly) remark that we see no way to completely resolve Question \ref{question1}.

\begin{question} \label{question2} For an arbitrary graph $G$ and $e \in E(G)$, is it true that $\dim(G) - \dim(G - e) \leq 1$?
\end{question}

Of course, one can produce myriad examples of $G$ and $e \in E(G)$ where the deletion of $e$ either does not change the dimension of the graph or reduces the dimension of the graph by 1.  However, we were unable to find a single instance where $\dim(G) - \dim(G - e) \geq 2$.  Our guess is that such graphs do not exist, and we would be very interested to see a proof.  Incidentally, if one instead considers the deletion of a vertex of $G$, there is a little more that can be said. 

\begin{question} \label{question3} Does there exist an integer $c$ such that for all graphs $G$ and $v \in V(G)$, we are guaranteed to have $\dim(G) - \dim(G \setminus \{v\}) \leq c$?  If so, can we let $c = 2$?
\end{question}

Again, it is easy to construct examples of $G$ and $v \in V(G)$ where $\dim(G) - \dim(G \setminus \{v\})$ is equal to 0 or 1.  However, if we let $G$ be the graph $K_2 + C_6$, we have by Theorem \ref{knpluscm} that $\dim(G) = 4$.  Designating by $v$ one of the vertices of $G$ of degree 7, we have that $G \setminus \{v\}$ is isomorphic to $W_6$.  The wheel $W_6$ is embeddable in $\mathbb{R}^2$ with the usual representation of a regular hexagon of edge-length 1 along with a vertex placed at its center, so here, $\dim(G) - \dim(G \setminus \{v\}) = 2$.  We were unable to construct an example where $\dim(G) - \dim(G \setminus \{v\}) \geq 3$.

\begin{question} \label{question4} For which $n$ does there exist an arbitrarily large bipartite graph $G$ which is dimension-critical with $\dim(G) = n$?
\end{question}

The question above is the easiest non-trivial case of a very deep question that we will present at the end of this section, yet even it appears to be rather thorny.  In \cite{eht}, Erd\H{o}s, Harary, and Tutte demonstrate that for any graph $G$, $\dim(G) \leq 2\chi(G)$ where $\chi(G)$ denotes the vertex-chromatic number of $G$.  It follows that any bipartite graph $G$ has $\dim(G) \in \{1,2,3,4\}$  Note that $\dim(G) = 1$ if and only if every component of $G$ is isomorphic to a path or an isolated vertex, so Question \ref{question4} is trivially answered in the negative when $n = 1$.  Equally trivial is the case $n = 2$ where Question 4 is answered in the affirmative.  Just take an arbitrarily large even cycle as the desired $G$.  For $n = 3$, we will show that it has an affirmative answer in the theorem below.

\begin{theorem} \label{mobiusladdertheorem} There exist arbitrarily large bipartite graphs $G$ which are dimension-critical with $\dim(G) = 3$.
\end{theorem}

\begin{proof} For an integer $n \geq 2$, define the \textit{M\"{o}bius Ladder} $M_{2n}$ to be the graph of order $2n$ constructed by beginning with two copies of the path $P_n$, say with the standard vertex sets $\{a_1, \ldots, a_n\}$ and $\{b_1, \ldots, b_n\}$, respectively, and then placing the additional edges $a_ib_i$ for $i \in \{1, \ldots, n-1\}$ along with $a_1b_n$ and $a_nb_1$.  As a reference, $M_{10}$ is drawn in Figure \ref{m10} below.

\begin{figure}%
\begin{center}
\includegraphics[scale=.95]{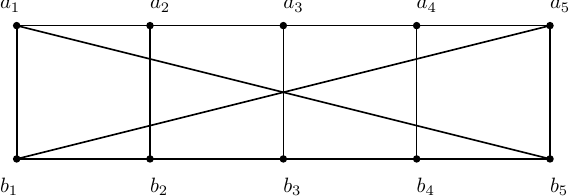}
\caption{}
\label{m10}
\end{center}
\end{figure}

Note that $M_{2n}$ is bipartite when when $n$ is odd.  We will now show that $\dim(M_{2n}) > 2$.  Indeed, suppose to the contrary that $M_{2n}$ has been drawn as a unit-distance graph in $\mathbb{R}^2$.  In such a representation, for $i = 1,2, \ldots, n-1$, the vertices $a_i, a_{i+1}, b_i, b_{i+1}$ form the vertices of a rhombus.  Since opposite sides of a rhombus are parallel, the vector with initial point $a_{i+1}$ and terminal point $b_{i+1}$ must be a translate of the vector with initial point $a_i$ and terminal point $b_i$.  Without loss of generality, we may assume that in this supposed unit-distance drawing of $M_{2n}$ in $\mathbb{R}^2$, we have $a_1$ placed at the origin and $b_1$ placed at $(1,0)$.  Now consider circles $C_{a_1}$ and $C_{b_1}$ drawn in Figure \ref{twocircles}, each of radius 1 and centered at $(0,0)$ and $(1,0)$, respectively.  Since $a_1b_n \in E(G)$, we must have $b_n$ placed on $C_{a_1}$, and similarly, $a_n$ placed on $C_{b_1}$.  However, by the rationale we described above, the line segment connecting $a_n$ and $b_n$ must be horizontal with $a_n$ to the left and $b_n$ to the right.  This is a contradiction as it would force $a_n$ to be placed in the exact same position as $b_1$ (as well as $b_n$ being placed in the same position as $a_1$).       

\begin{figure}%
\begin{center}
\includegraphics[scale=.55]{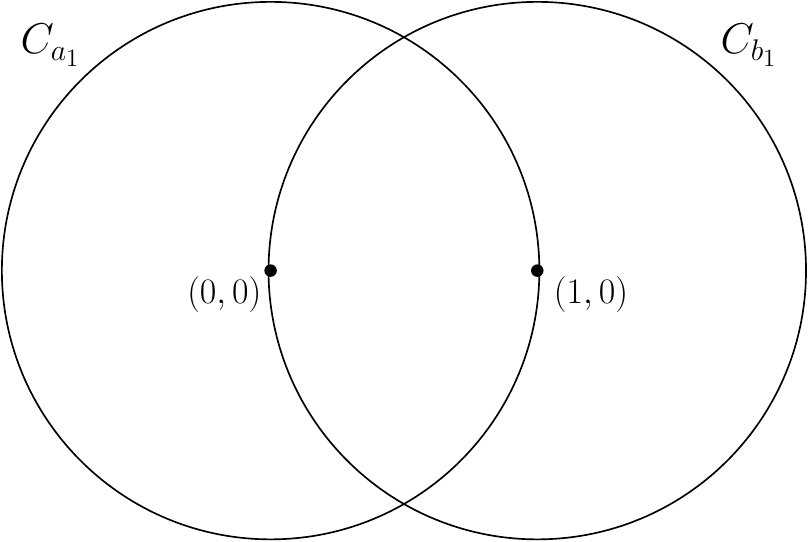}
\caption{}
\label{twocircles}
\end{center}
\end{figure}

When $n = 3$, the graph $M_{2n}$ is isomorphic to $K_{3,3}$ and it has already been seen that $\dim(K_{3,3}) = 4$.  For all higher $n$, it is the case that $\dim(M_{2n}) = 3$ and furthermore, $M_{2n}$ is dimension-critical.  However, we do not need this fact to establish proof of the theorem.  One need only observe that, should one start with $M_{2n}$ and then delete vertices (if necessary) until a dimension-critical subgraph $H$ of $M_{2n}$ has been created with $\dim(H) = 3$, then for each $i \in \{1, \ldots, n\}$, the vertices $a_i$ and $b_i$ would not both be deleted.  The theorem immediately follows.\qed
\end{proof}

We have been unable to resolve Question \ref{question4} when $n = 4$.  In fact, other than $K_{3,3}$, we have not been able to supply any concrete examples of dimension-critical bipartite graphs $G$ with $\dim(G) = 4$.  As it turns out, though, such graphs do exist, which can be seen by observing two major results in extremal combinatorics.  In \cite{brown}, Brown constructs a family of bipartite graphs of order $n$ which do not have $K_{3,3}$ as a subgraph, and whose number of edges is asymptotically on the order of $n^{\frac53}$.  It is independently shown by Kaplan, Matou\v{sek}, Safernov\'{a}, and Sharir in \cite{kaplan} and by Zahl in \cite{zahl} that an upper bound for the number of edges in a graph $G$ of order $n$ and satisfying $\dim(G) = 3$ is asymptotically on the order of $n^{\frac32}$.  Thus for sufficiently large $n$, a graph $G$ of order $n$ produced via Brown's construction will automatically satisfy $\dim(G) = 4$.  Unfortunately (at least, from our point of view), Brown's construction is entirely algebraic, and it seems quite difficult to determine what a dimension-critical subgraph of this $G$ would actually be.

The general formulation of Question \ref{question4} is given below.

\begin{question} \label{question5} For which $n, k$ does there exist an arbitrarily large dimension-critical graph $G$ with $\chi(G) = k$ and $\dim(G) = n$?
\end{question}

A full resolution of this question is far beyond our present reach.  For example, a torrent of work has been produced in the past few years on coloring unit-distance graphs in $\mathbb{R}^2$, much of it stemming from de Grey's stunning construction \cite{degrey} of a 5-chromatic graph unit-distance graph in the plane.  Yet still, it is unknown as to whether there even exists $G$ satisfying $\dim(G) = 2$ and $\chi(G) \in \{6,7\}$, let alone an arbitrarily large dimension-critical $G$ with those properties.  However, if a successful approach could resolve Question 4, perhaps it could be applied to the more modest Question 6.

\begin{question} \label{question6} For which $k$ does there exist an arbitrarily large dimension-critical graph $G$ with $\chi(G) = k$ and $\dim(G) = 2k$?
\end{question}


\end{document}